\newcommand{\marginnote}{\todo[color=blue!20, linecolor=blue, bordercolor=blue]}
\theoremstyle{plain}
\newtheorem{theorem}{Theorem}
\newtheorem{corollary}[theorem]{Corollary}
\newtheorem{observation}[theorem]{Observation}
\theoremstyle{definition}
\newtheorem{definition}[theorem]{Definition}
\theoremstyle{remark}
\newtheorem{remark}[theorem]{Remark}
\newcommand{\defeq}{\vcentcolon=}
\newcommand{\set}[2][]{#1\{ #2 #1\}}
\newcommand{\abs}[2][]{#1\vert #2 #1\vert}
\newcommand{\size}[2][]{#1\vert #2 #1\vert}
\newcommand{\R}{\mathbb{R}}
\newcommand{\E}{\mathop{\mathrm{E}}}
\newcommand{\Var}{\mathop{\mathrm{Var}}}
\newcommand{\B}{\mathcal{B}}
\newcommand{\C}{\mathcal{C}}
\newcommand{\F}{\mathbb{F}}
\renewcommand{\L}{\mathcal{L}}
\renewcommand{\u}{ to ++(1,1) node[draw=none] {}}
\renewcommand{\d}{ to ++(1,-1) node[draw=none] {}}
\newcommand{\dl}{ to ++(-1,-1) node[draw=none] {}}
\newcommand{\ul}{ to ++(-1,1) node[draw=none] {}}
\title{Lattice Path Matroids: Negative Correlation and Fast Mixing}
\author{Emma Cohen\footnote{School of Mathematics, Georgia Tech, Atlanta, GA 30332-0160} \and Prasad Tetali\footnote{School of Mathematics and School of Computer Science, Georgia Tech, Atlanta, GA 30332-0160}\and Damir Yeliussizov\footnote{Department of Computer Science and Electrical Engineering, Kazakh-British Technical University, Almaty, Kazakhstan}}
\begin{document}

        \maketitle

\begin{abstract}
Catalan numbers arise in many enumerative contexts as the counting sequence of combinatorial structures. In this work, we consider natural Markov chains on some of the realizations of the Catalan sequence. 
While our main result is in deriving an $O(n^2 \log n)$ bound on the mixing time in $L_2$ (and hence total variation) distance for the random transposition chain on Dyck paths, we raise several open questions, including the optimality of the above bound. The novelty in our proof is in establishing a certain negative correlation property among random bases of lattice path matroids, including the so-called {\em Catalan matroid} which can be defined using Dyck paths.
\end{abstract}

\section{Introduction} \label{sec:intro}

There are several combinatorial structures whose enumeration is given by the Catalan sequence: $1, 2, 5, 14, 42, \ldots$, where the $n$th term (for $n\ge 1$) is given by the Catalan number $C_n = \frac{1}{n+1}\binom{2n}{n}$. Some examples include the set of all triangulations of a regular polygon of $n+2$ sides, the set of non-crossing partitions of an $n$-set (the lattice on which is of much interest to researchers in free probability), and the set of balanced parentheses, with $n$ left and $n$ right parentheses---or equivalently, described as {\em Catalan strings} $x\in \{\pm 1\}^{2n}$ of $n$ $1$'s and $n$ $-1$'s with non-negative partial sums, $\sum_{i=1}^j x_i \ge 0$, for all $j$. This last structure (of Catalan strings) is also known as the set of Dyck paths, and is visualized as lattice paths of $n$ up-steps and $n$ down-steps, representing the $1$'s and the $-1$'s respectively.

For any given $n\ge 1$, generating a Catalan structure (such as triangulation or a Dyck path) uniformly at random from the set of $C_n$ many, is a straightforward task, and can be done in time linear in $n$, not too unlike generating a permutation uniformly at random  (out of the $n!$ many permutations) of $n$ distinct letters. 
However, the study (of convergence to equilibrium) of particular Markov chain Monte Carlo algorithms which yield a random Catalan structure is seemingly much more interesting. The inspiration to this endeavor in part stems from  an open problem of David Aldous~\cite{Aldous94}, who conjectured that the random walk on triangulations of a polygon on $n$ sides, performed using uniform diagonal flips, ought to take time roughly  $n^{3/2}$ (up to factors logarithmic in $n$).
Despite much effort by various researchers in this topic over several years, the best known bounds for the mixing time of the chain on triangulations remain those of  McShine-Tetali \cite{McShine1999}, who proved an upper bound of $O(n^4)$ on the relaxation time (also known as the inverse spectral gap), and Molloy-Reed-Steiger \cite{MRS98} who showed  $\Omega(n^{3/2})$ as a lower bound.

In the work, we explore a (different) Markov chain on a different realization of the Catalan number: we consider the set of Dyck paths, described using binary strings $x \in \{\pm 1\}^{2n}$, as mentioned above, and consider the natural local move of exchanging $x_i$ and $x_j$, for a pair $(i,j)$ chosen uniformly at random from among all possible pairs, {\em provided} that the resulting string is a (valid) Catalan string; else we reject such a move. Note that, without the rejection step, we would have a random transposition chain on the set of $\binom{2n}{n}$ many binary strings with an equal number of $1$'s and $-1$'s. It is well-known (see e.g., \cite{Diaconis1987})---sometimes under the name \emph{Bernoulli-Laplace model}---that $O(n\log n)$ such random transpositions are necessary and sufficient to reach (close to) equilibrium. It is also well-known that the so-called random transposition shuffle of $n$ distinct cards mixes in $O(n\log n)$ time. However, the ``sea-level'' constraint of having the partial sums of the (Catalan) string stay non-negative seems to thwart any type of straight forward (or otherwise) analyses, despite the best efforts of several experts in the field. 

Our main result is an $O(n^2\log n)$ upper bound on the mixing time of the random transposition walk on Dyck paths of length $n$.  The proof relies on a rephrasing of the walk as a basis exchange walk on a balanced matroid.  In Section \ref{sec:matroids} we describe the Catalan matroid (due to Ardila \cite{Ardila2003})  and the prior work \cite{Feder1992, Jerrum2004, Jerrum-Son} regarding mixing times for the basis exchange walk on balanced matroids.  All that remains to bound the mixing time of our Dyck path walk is the proof in Section \ref{sec:main-result} that the Catalan matroid is balanced.  In Section \ref{sec:lower-bounds} we give a short proof of a mixing time lower bound for Dyck random transpositions and, finally, in Section \ref{sec:open-questions} we explore the possible consequences of this result for other walks on Catalan structures and pose some open questions.

\subsection{Random walks on lattice paths}
Throughout, for integers $a\leq b$ we let $[a,b] \defeq \set{a, a+1,\dots\,b}$ denote the discrete closed interval and $[a] \defeq [1,a]$.

A \emph{lattice path} of length $m$ is a string $P\in\set{\pm1}^m$ of \emph{up-steps} ($+$) and \emph{down-steps} ($-$).  The \emph{height} of $P$ at index $i$ is $h_i(P) = \sum_{j=1}^i P_i$, and we can draw $P$ on the grid as the graph of $f_P(i) = h_i(P)$.  That is, we draw a path starting at $(0,0)$ and taking up-steps $(1,1)$ and down-steps $(1,-1)$.
We will refer to a lattice path $P$ of length $m$ as a lattice path \emph{from $(0,0)$ to $(m, h_m(P))$}.

From this we can define a partial order on the set of lattice paths from $(0,0)$ to $(m,2r-m)$ by letting $P \leq Q$ whenever $h_i(P) \leq h_i(Q)$ for all $i\in [m]$.  Note that $P\leq Q$ if and only if $q_i \leq p_i$ for all $i\in[r]$, where $p_i$ (resp. $q_i$) is the index of the $i$th up-step in $P$ (resp. $Q$).

\begin{definition}
    A \emph{Dyck path} of length $2n$ is a lattice path $P$ from $(0,0)$ to $(2n,0)$ with $h_i(P) \geq 0$ for all $i\in[2n]$.
\end{definition}

That is, the Dyck paths are precisely the paths $P$ from $(0,0)$ to $(0,2n)$ with $P \geq (+-)^n$. It is a standard result that the number of Dyck paths of length $2n$ is the Catalan number $C_n = \frac{1}{n+1}\binom{2n}{n}$.

A natural class of random walks on lattice paths from $(0,0)$ to $(m,h)$ is the \emph{transposition walk}, which at each step picks random indices $i,j\in[m]$ and swaps the steps of $P$ at those indices.
Indeed, if $i$ and $j$ are chosen uniformly independently at random this is just the Bernoulli-Laplace model on $\binom{[m]}{r}$, where $r = (h+m)/2$ is the number of up-steps in every such path, and has been studied extensively.  If we pick $i,j$ to be uniform among \emph{adjacent} pairs of indices, we get a different walk, whose mixing properties were determined in \cite{Wilson2004}.

But what if we consider lattice paths from $(0,0)$ to $(2n, 0)$ and restrict the walk to Dyck paths?  That is, from any Dyck path we pick $i,j$ according to one of these rules but only perform the move if the resulting path is again a Dyck path. (We will call these walks the \emph{Dyck random transpositions walk} and \emph{Dyck adjacent transpositions walk}, respectively.)  

Indeed, the analysis of \cite{Wilson2004} extends also to this case, yielding the same upper bound for the mixing time of the Dyck adjacent transpositions walk.  On the other hand, the Dyck random transpositions walk has evaded such precise analysis.

\section{Lattice path matroids} \label{sec:matroids}

\subsection{Matroids and the basis exchange walk}

Recall that a nonempty set $\B\subseteq 2^U$ is the set of \emph{bases} of a \emph{matroid} $M = (U, \B)$ if the following \emph{basis exchange axiom} holds:
\begin{quote}
    \textbf{Matroid Basis Exchange Axiom.} For any bases $A,B\in \B$ and every $e\in A\setminus B$ there exists $f\in B\setminus A$ such that $A\setminus\set{e}\cup\set{f}\in \B$ is a basis.
\end{quote}
Among other things, this axiom guarantees that all bases have the same cardinality, which is the \emph{rank} of $M$.

We will also make use of two dual operations on matroids: contraction and deletion.
\begin{definition}
    For a matroid $M = (U, \B)$ and an element $e\in M$, the matroid \emph{$M$ contract $e$} is $M_e = (U, \B_e)$, where $\B_e = \set{B\in \B: e\in B}$.  Similarly, \emph{$M$ delete $e$} is $M^e = (U, \B^e)$, where $B^e = \set{B\in \B: e\not\in B}$.\footnote{Note that this definition differs slightly from the usual one, in which the element $e$ being contracted or deleted is removed from the ground set.  In our case it will be convenient to leave $e$ in place, so as to more easily identify $\B_e\subseteq \B$ and preserve identities such as $\B = \B_e \cup \B^e$ (with $\B_e\cap \B^e = \emptyset$).}
  
    A \emph{minor} of $M$ is any matroid which can be obtained from $M$ through a series of contractions and deletions.
\end{definition}

The order in which contractions and deletions are performed does not matter, so we will write $M_I^J$ for the matroid obtained from $M$ by contracting the elements in $I$ and deleting the elements in $J$, and $\B_I^J$ will denote the set of bases of $M_I^J$.

Given a matroid $M$, Feder and Mihail \cite{Feder1992} study the following \emph{basis exchange walk} on the state space $\B$ of bases: 
\begin{quote} 
    From state $B\in\B$, pick uniformly and independently at random elements $a\in U$ and $b\in B$ and move in the next step to $B' = B\cup\set{a}\setminus\set{b}$ if $B'\in \B$ (else remain at $B$). 
\end{quote}
The matroid basis exchange axiom guarantees that this walk is ergodic, and since it is symmetric its stationary distribution is uniform.  It is tentatively conjectured that the basis exchange walk is fast for any matroid (i.e., the mixing time is bounded by some polynomial in $m = \size{U}$), but there is little evidence in favor of this.  

On the other hand, \cite{Feder1992} introduced the notion of \emph{balanced} matroids, which capture the notion that for a randomly chosen basis (of the matroid or any of its minors), conditioning on the occurrence of one element (in the basis) makes the occurrence of any other less probable.  They show that for the case of balanced matroids the walk is indeed rapidly mixing, by using decomposition techniques to bound its spectral gap.

\begin{definition}
    A matroid $M = (U,\B)$ is \emph{negatively correlated} if for every pair of distinct elements $e,f\in U$  
    \begin{align} \label{eqn:negative-correlation}
        \frac{\size{\B_e}}{\size{\B}} \geq \frac{\size{\B_{ef}}}{\size{\B_f}}. 
    \end{align}
    The matroid $M$ is \emph{balanced} if $M$ and all of its minors are negatively correlated.
\end{definition}
Negative correlation is equivalent to the very natural condition that for a uniform random basis $B\in\B$, $\Pr[e\in B] \geq \Pr[e\in B | f\in B]$. 

Indeed, many common classes of matroids are balanced, including uniform matroids (whose bases are all size-$r$ subsets of the ground set), graphic matroids (with the ground set being the edges of a connected graph and the bases being spanning trees of the graph), matroids of rank $\leq 3$, and regular matroids (matroids which can be represented over every field) \cite{Feder1992,Choe2006}.  We will rely on the following extension of Feder and Mihail's result for balanced matroids, due to Jerrum and Son.

\begin{theorem}[\cite{Jerrum-Son}]\label{thm:balanced-mixing}
    The spectral gap $\lambda$ and log-Sobolev constant $\alpha$ for the basis exchange walk on a balanced matroid $M$ of rank $r$ on a ground set of size $m$ are lower bounded by
    \begin{align}
        \lambda \geq \frac{2}{mr} \qquad\text{and}\qquad \alpha \geq \frac{1}{2mr}.
    \end{align}
\end{theorem}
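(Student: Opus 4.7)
The plan is to proceed by induction on the rank $r$ of $M$, via a Markov chain decomposition along a chosen ground-set element. The base case, in which $M$ has a unique basis, is immediate.

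For the inductive step, fix any $e \in U$ and partition the state space as $\B = \B_e \sqcup \B^e$. The basis exchange walk restricted to $\B_e$ is, up to a holding-probability scaling, the basis exchange walk on the contraction $M_e$ (of rank $r-1$), and the walk restricted to $\B^e$ is the basis exchange walk on the deletion $M^e$ (of rank $r$ on an effective ground set of size $m-1$). Since $M$ is balanced, both $M_e$ and $M^e$ are balanced as well, so the inductive hypothesis supplies spectral gap and log-Sobolev lower bounds for each restricted chain.

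For the spectral gap, I would then apply a standard decomposition theorem (of Caracciolo--Pelissetto--Sokal type): the variance identity
\[
\mathrm{Var}_\pi(f) = \mathbb{E}_\pi\!\left[\mathrm{Var}(f \mid \mathbf{1}_{e \in B})\right] + \mathrm{Var}_\pi\!\left(\mathbb{E}[f \mid \mathbf{1}_{e \in B}]\right)
\]
splits the Dirichlet form into a within-block contribution, controlled by the two inductive bounds, and a between-block contribution, controlled by the spectral gap of the two-state projection chain on $\{\B_e, \B^e\}$. The projection gap is precisely where the negative correlation hypothesis enters: applied uniformly over potential swap partners, it forces the conductance between the two blocks to be at least of order $\Pr[e \in B]\,\Pr[e \notin B]/m$, which gives the projection chain a constant spectral gap. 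Combining the within-block bounds with the projection bound and carefully tracking the holding probabilities then yields $\lambda \geq 2/(mr)$.

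For the log-Sobolev constant the same scheme applies, but with the entropy decomposition
\[
\mathrm{Ent}_\pi(f^2) = \mathbb{E}_\pi\!\left[\mathrm{Ent}(f^2 \mid \mathbf{1}_{e \in B})\right] + \mathrm{Ent}_\pi\!\left(\mathbb{E}[f^2 \mid \mathbf{1}_{e \in B}]\right),
\]
combined with a log-Sobolev decomposition theorem in the spirit of Martinelli--Olivieri block dynamics. The main obstacle is this log-Sobolev step: unlike the variance case, the between-block piece is an $f^2 \log f^2$ quantity on a two-point space rather than a simple variance, so propagating the bound cleanly requires choosing the recursion parameters so that the projection's log-Sobolev constant (again supplied by negative correlation) is strong enough to preserve the sharp coefficient $1/(2mr)$ through the induction without incurring an extra factor of $\log m$ or $\log r$. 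The remaining pieces, namely identifying the restricted chains with rescaled walks on the minors and recognizing the projection as a two-state chain, are largely mechanical once the decomposition framework is in place.
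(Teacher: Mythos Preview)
The paper does not contain a proof of this theorem: it is quoted as a black box from \cite{Jerrum-Son} (with the log-Sobolev bound coming from the Jerrum--Son--Tetali--Vigoda line of work), and the remark immediately following it says explicitly that the paper never unpacks the argument. So there is no ``paper's own proof'' to compare against.

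That said, your outline is faithful to the argument in the cited references: one fixes a ground-set element $e$, decomposes $\B$ into $\B_e$ and $\B^e$, identifies the restricted chains with (slowed) basis exchange walks on the balanced minors $M_e$ and $M^e$, and uses negative correlation to lower bound the gap (resp.\ log-Sobolev constant) of the two-state projection chain, then combines via the variance/entropy decomposition. One correction: the induction cannot be on the rank $r$ alone, since the deletion $M^e$ has the same rank $r$ as $M$. The actual induction in \cite{Jerrum-Son} runs on the size $m$ of the ground set (both $M_e$ and $M^e$ live on an effective ground set of size $m-1$), and the base case is when $\B$ is a singleton. With that fix your sketch matches the published proof.
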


In particular, via standard mixing time bounds (see \cite{Diaconis1996}), the bound on the log-Sobolev constant implies that the mixing time of the basis exchange walk on a balanced matroid is at most $O(mr\log\log\size{\B})$.  

\begin{remark}
Due to these standard bounds and the result for balanced matroids the proof of our main result never explicitly refers to the definitions of $\lambda$ and $\alpha$.  These definitions can, however, be found in Section \ref{sec:lower-bounds}, where they are used to give a lower bound on the mixing time for the Dyck random transposition chain.
\end{remark}

\subsection{The Catalan matroid and other lattice path matroids}
To aid our setting of the Catalan transposition walk in terms of matroids, we refer to an observation of Ardila \cite{Ardila2003} that the set of Dyck paths can be thought of as a matroid.

\begin{definition}
    The \emph{Catalan matroid} of order $n$ is $\C(n) = ([2n], \B(n))$, where the elements of $\B(n)$ are the index-sets of up-steps in Dyck paths of length $2n$.
\end{definition}

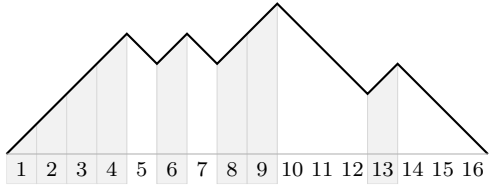
\begin{figure}
    \centering
    \begin{tikzpicture}[scale=.4]
        \begin{scope}
        \path[clip] (0,-1) to (0,0) \u \u \u \u \d \u \d \u \u \d \d \d \u \d \d \d to (16, -1);
        \foreach \x in {1,2,3,4,6,8,9,13}{
            \draw[fill=black!5, draw=black!15] (\x-1,-1) rectangle (\x,6);
        }
        \end{scope}
        \draw[thick] (0,0) \u \u \u \u \d \u \d \u \u \d \d \d \u \d \d \d;
        \foreach \x in {1,...,16}{
            \node at (\x-.5, -.5) {\footnotesize$\x$};
        }
        \draw[thin, draw=black!30] (0,0) to (16,0);
    \end{tikzpicture}
    \caption{The Dyck path $(++++-+-++---+---)$ above corresponds to the basis $\set{1,2,3,4,6,8,9,13}$ of the Catalan matroid of order $8$.}
\end{figure}

Ardila notes that this is precisely the transversal matroid for the set system $\mathcal{S} = \set{[1],[3],\dots,[2n-1]}$ (i.e., its bases are the systems of distinct representatives of $\mathcal{S}$).  In short, the representative of the set $[2i-1]$ will be the index of the $i$th up-step in the corresponding Dyck path (although this assignment of distinct representatives need not be unique).  Indeed,

\begin{observation} 
    The basis exchange walk on the Catalan matroid is exactly the random transposition walk on Dyck paths.  
\end{observation}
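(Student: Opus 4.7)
The plan is to set up the canonical bijection between Dyck paths of length $2n$ and bases of $\C(n)$, namely $P\mapsto B(P)=\set{i\in[2n]:P_i=+1}$, and then verify that the two Markov chains have identical transition matrices on this common state space. Since both chains are defined by (i) an independent uniform proposal and (ii) a deterministic accept/reject depending on whether the result lies in $\B(n)$, it suffices to match the proposal probabilities step by step.

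First I would expand out one step of the random transposition walk. Sampling $(i,j)\in[2n]^2$ uniformly and swapping $P_i$ and $P_j$ is a no-op on $B(P)$ precisely when $i=j$, $\set{i,j}\subseteq B(P)$, or $\set{i,j}\cap B(P)=\emptyset$. Otherwise exactly one of the indices, say $i\in B(P)$, is an up-step and $j\notin B(P)$ is a down-step, and the proposal corresponds to $B'=B(P)\setminus\set{i}\cup\set{j}$, which is accepted iff $B'\in\B(n)$. In particular, the unordered pair $\set{a,b}$ with $a\notin B$, $b\in B$ is proposed with probability $\frac{2}{(2n)^2}=\frac{1}{2n^2}$.

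Next I would expand one step of the basis exchange walk from $B\in\B(n)$. The proposal is $B'=B\cup\set{a}\setminus\set{b}$ with $a\in[2n]$, $b\in B$ drawn independently and uniformly. Splitting on whether $a\in B$: if $a=b$ then $B'=B$ (a trivial self-loop); if $a\in B$ and $a\neq b$ then $|B'|=n-1$, so $B'\notin\B(n)$ and we reject; if $a\notin B$ then $|B'|=n$ and $B'$ is accepted exactly when $B'\in\B(n)$. The probability of proposing a specific substantive pair $a\notin B$, $b\in B$ is $\frac{1}{2n}\cdot\frac{1}{n}=\frac{1}{2n^2}$.

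Comparing: for every $B\neq B'$ in $\B(n)$ the transition probability is $\frac{1}{2n^2}$ in both chains (and zero unless $B,B'$ differ by a single element). Since both chains are stochastic on the same state space and agree on all off-diagonal entries of the transition matrix, their diagonal (self-loop) entries must also agree, so the two walks are identical. There is no serious obstacle here; the only thing to keep straight is the accounting of the $a\in B$ cases in the basis exchange walk (half of which are genuine self-loops $a=b$ and half rejected cardinality-violating proposals), which contribute the same total $\tfrac12$ self-loop mass as the three no-op cases ($i=j$, both up, both down) in the transposition walk.
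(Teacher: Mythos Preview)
Your argument is correct: matching the off-diagonal transition probabilities (both equal to $\tfrac{1}{2n^2}$ for any single-element swap) and invoking stochasticity to conclude the diagonals agree is exactly the right way to verify the observation. The paper itself offers no proof at all---it is stated as a bare observation---so there is nothing to compare approaches against; your explicit computation is a welcome addition.

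One small slip in your final parenthetical: the $a\in B$ cases in the basis exchange walk are \emph{not} split half-and-half between genuine self-loops ($a=b$) and cardinality-violating rejections ($a\in B$, $a\neq b$). The former has probability $\tfrac{1}{2n}$ and the latter $\tfrac{n-1}{2n}$, summing to the claimed $\tfrac12$ but not in equal parts. This does not affect your argument, since you (correctly) never rely on computing the self-loop mass directly---the off-diagonal match plus row-stochasticity already forces equality on the diagonal.
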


In other words, given Theorem \ref{thm:balanced-mixing}, to obtain a mixing time bound for the Dyck transposition walk it suffices to show that the Catalan matroids are balanced.

To this end, it would be convenient if the Catalan matroids belonged to some known class of balanced matroids.  As noted above, the main class of matroids known to be balanced is \emph{regular matroids}, but Ardila notes that $\C(n)$ is not representable over any $\F_q$ for $q\leq n-2$ and thus is not regular.  Transversal matroids also need not be balanced in general: Choe and Wagner \cite{Choe2006} give a transversal matroid of rank 4 which is not balanced.

Thus the bulk of our work here will be to show that the Catalan matroid is balanced, and from this the main mixing result will follow immediately.  For our later discussion it will help to work with a minor-closed class of matroids (which the Catalan matroids certainly are not).  Bonin and de Mier (\cite{Bonin2006}) discuss the following class of \emph{lattice path matroids}, which generalize the Catalan matroids by allowing \emph{any} pair of bounding paths.

\begin{definition}
    For two lattice paths $A\leq B$ from $(0,0)$ to $(m, 2r-m)$, consider the set $L$ of lattice paths $P$ from $(0,0)$ to $(m, 2r-m)$ with $A\leq P \leq B$.  The \emph{lattice path matroid} $\mathcal{L}[A,B]$ (which is of rank $r$ on ground set $[m]$) has as its bases the index sets of up-steps of paths in $L$.
\end{definition}

Although it is not immediately obvious that $\mathcal{L}[A,B]$ is a matroid, \cite{Bonin2006} observes that in fact the lattice path matroid $\L[A,B]$ is the transversal matroid of the set system $\set{[a_1,b_1], [a_2,b_2], \dots, [a_r, b_r]}$, where $a_i$ (resp. $b_i$) is the index of the $i$th up-step in $A$ (resp. $B$).  Indeed, it is shown in that paper that the transversal matroids of a set systems $\set{[a_1,b_1],[a_2,b_2],\dots,[a_r,b_r]}$ with $a_1 \leq a_2 \leq \dots \leq a_r$ and $b_1 \leq b_2 \leq \dots \leq b_r$ are precisely the lattice path matroids. (The set system of this form corresponding to a given lattice path matroid may not be unique; for example, the Catalan matroid is the transversal matroid for both $\set{[1],[3],\dots,[2n-1]}$ and $\set{[1],[2,3],\dots,[n,2n-1]}$.)  The class of basis exchange walks on lattice path matroids also includes the (unconstrained) Bernoulli-Laplace model as the basis exchange walk on a uniform matroid.


In addition to showing this correspondence, Bonin and de Mier show that the class of lattice path matroids is closed under taking minors and duals.  They also define a smaller (in fact, minimal) minor-closed class of matroids containing $\C_n$, which they call \emph{generalized Catalan matroids}, consisting of the lattice path matroids $M(A,B)$ where $A = (+)^r (-)^{m-r}$ is maximal among all paths from $(0,0)$ to $(m, 2r-m)$.  They give several nice properties of generalized Catalan matroids, which we will not go into here as our analysis covers all lattice path matroids.  Indeed, Sohoni \cite{Sohoni1999} has actually already shown the balanced property of generalized Catalan matroids, although he calls them \emph{Schubert matroids} and does not link his result to rapid mixing of the Dyck random transposition chain (although he does mention the Dyck adjacent transposition chain).  Our proof applies to the more general class of lattice path matroids.

Given that the class of lattice path matroids is minor-closed, it suffices for our main result to show that lattice path matroids are negatively correlated, but it is worth noting that the same method can also be used to show negative correlation directly for any minor of a lattice path matroid.

\section{Mixing bound for random transpositions} \label{sec:main-result}

To show negative correlation for lattice path matroids, it will be convenient to use an equivalent formulation of \eqref{eqn:negative-correlation}, which is easily obtained by repeatedly applying the identity $\size{\B} = \size{B_e} + \size{B^e}$,
\begin{align}
    \size{\B_{ef}} \size{\B^{ef}} \leq \size{\B_{e}^{f}} \size{\B_{f}^{e}}\,. \label{neg-cor-equiv}
\end{align}
Now we are ready to prove

\begin{theorem}
    For every pair of lattice paths $A\leq B$ from $(0,0)$ to $(m, 2r-m)$ the lattice path matroid $\L[A,B]$ is negatively correlated.
\end{theorem}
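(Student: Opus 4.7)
The plan is to verify the equivalent inequality \eqref{neg-cor-equiv} by constructing an explicit injection $\phi\colon \B_{ef}\times\B^{ef}\hookrightarrow\B_e^f\times\B_f^e$, working directly with the lattice-path presentation of bases. Without loss of generality assume $e < f$, and let $(P,Q)\in \B_{ef}\times \B^{ef}$, so that $P$ has up-steps at both $e$ and $f$ while $Q$ has down-steps at both. For $\alpha,\beta \in \set{+,-}$, write $S_{\alpha\beta}(P,Q) = \set{k\in [m]: P[k]=\alpha,\, Q[k]=\beta}$; equality of the up-step counts of $P$ and $Q$ forces $|S_{+-}| = |S_{-+}|$, and since $e,f\in S_{+-}$, the set $S_{-+}$ contains at least two elements.

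The map $\phi$ swaps the $(P,Q)$-column at position $f$ with that at a canonically chosen $i\in S_{-+}$: set $\phi(P,Q)=(P',Q')$ to agree with $(P,Q)$ outside $\set{f,i}$, while $(P'[f],Q'[f]) = (-,+)$ and $(P'[i],Q'[i]) = (+,-)$. By construction $(P'[e],Q'[e])=(+,-)$ and $(P'[f],Q'[f])=(-,+)$, and the column multiset of $(P',Q')$ matches that of $(P,Q)$, so $P' \in \B_e^f$ and $Q' \in \B_f^e$ as soon as both lie in $[A,B]$.

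The effect on heights is a $+2$ shift of $P$ and a $-2$ shift of $Q$ across $[i,f-1]$ when $i<f$, or the reverse shift over $[f,i-1]$ when $i>f$. Thus $(P',Q')\in[A,B]^2$ holds iff the appropriate slack conditions $b_k-h_k(P)\geq 2$ and $h_k(Q)-a_k\geq 2$ (or the symmetric inequalities $h_k(P)-a_k\geq 2$ and $b_k-h_k(Q)\geq 2$) hold throughout the swap interval. The canonical rule I have in mind is to take $i$ to be the element of $S_{-+}$ closest to $f$ for which the slack conditions hold; injectivity then follows because the same rule applied in reverse recovers $(P,Q)$ from $(P',Q')$ by locating the unique $(+,-)$-column nearest $f$ that witnesses a valid reverse-swap.

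\textbf{Main obstacle.} Proving existence of a valid swap position in $S_{-+}$ is the crux. If every candidate $i$ on one side of $f$ violates a slack constraint---meaning $P$ touches $B$ or $Q$ touches $A$ somewhere inside the candidate interval---one must then locate an unobstructed candidate on the opposite side. I expect the argument to hinge on the interval structure of the transversal presentation of $\L[A,B]$: a boundary obstruction at some position $k$ forces complementary slack elsewhere, guaranteeing an unobstructed swap at some alternative $i\in S_{-+}$. The proof will likely reduce to a careful combinatorial case analysis tracking how $P$ and $Q$ abut the extremal paths $A$ and $B$, together with an argument that the canonical rule produces the same swap position in both directions.
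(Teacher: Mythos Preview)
Your approach---swapping the single column at $f$ with that at a canonically chosen $i\in S_{-+}$---is genuinely different from the paper's, which swaps entire path \emph{segments} between $P$ and $Q$ at a crossing point.  Unfortunately, the canonical rule you propose does not yield an injection.

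Here is a counterexample in the uniform matroid $\L[A,B]$ on $[10]$ with $A=(-)^5(+)^5$ and $B=(+)^5(-)^5$, taking $e=1$, $f=2$.  Consider
\begin{align*}
(P,Q) &= (+++-++----,\; ---+++++--),\\
(\tilde P,\tilde Q) &= (++-+++----,\; --+-++++--).
\end{align*}
Both pairs lie in $\B_{ef}\times\B^{ef}$.  For $(P,Q)$ one has $S_{-+}=\{4,7,8\}$, and the nearest valid swap index to $f$ is $i=4$; for $(\tilde P,\tilde Q)$ one has $S_{-+}=\{3,7,8\}$, with nearest valid index $i=3$.  In each case the slack conditions hold (here $h_k(B)-h_k(A)\ge 4$ throughout the swap interval), and both swaps produce the \emph{same} image
\[
(P',Q')=(+-++++----,\; -+--++++--).
\]
So $\phi$ is not injective; indeed your reverse rule, applied to $(P',Q')$, locates the $(+,-)$-column at position~$3$ (nearer to $f$ than~$4$) and returns $(\tilde P,\tilde Q)$ rather than $(P,Q)$.

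The difficulty is intrinsic to single-column swaps: on the interval between $f$ and $i$, the heights of $P$ and $Q$ are shifted by $\pm 2$, so the unordered pair $\{h_k(P'),h_k(Q')\}$ is \emph{not} determined by $\{h_k(P),h_k(Q)\}$, and distinct inputs can collapse.  The paper sidesteps this by exchanging tails of $P$ and $Q$ after a point where they actually meet (so $\{h_k(P'),h_k(Q')\}=\{h_k(P),h_k(Q)\}$ at every $k$), with two auxiliary cases that swap a short translated segment when no meeting point exists between $e$ and $f$; in all cases $P',Q'$ stay sandwiched between $P$ and $Q$, making validity automatic and the inverse recoverable from $(P',Q')$ alone.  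Repairing your scheme would need a substantially different canonical rule, and the existence problem you already flagged would still remain.
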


\begin{proof}
Let $\L[A,B] = ([m], \B)$ be any lattice path matroid.

To prove inequality \eqref{neg-cor-equiv} for every pair $e < f\in [m]$, we will construct an injective map
\[\varphi_{ef} : \B_{ef} \times \B^{ef} \to \B_{e}^{f} \times \B_{f}^{e}.\] 
Note that we can associate members of these sets with lattice paths in $\L[A,B]$ by
\begin{itemize}
\item $\B_{ef}$: paths with up-steps at indices $e,f$,
\item $\B^{ef}$: paths with down-steps at indices $e,f$,
\item $\B_{e}^{f}$: paths with an up-step at index $e$ and a down-step at index $f$, and
\item $\B_{f}^{e}$: paths with a down-step at index $e$ and an up-step at index $f$.
\end{itemize}

Let $P \in \B_{ef}$ and $Q \in \B^{ef}$ be lattice paths and consider the following cases. (The figures shown are for the Catalan matroid.)

\begin{description}
\item[Case 1.] Suppose the paths $P,Q$ intersect (without necessarily crossing) in the region (I) between $e$ and $f$.  Note that this includes all cases where the path $P$ is below $Q$ at $e$ and above $Q$ at $f$ or vice versa.  Take the first such intersection point $x$ in (I) and switch the paths $P, Q$ after $x$ to obtain new paths $P' \in \B_{e}^{f}, Q' \in \B_{f}^{e}$ as shown in Figure \ref{fig:case1}.  Set $\varphi_{ef}(P, Q) = (P', Q').$

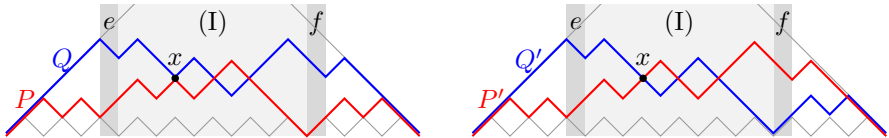
\begin{figure}[ht]
  \centering 
  \begin{tikzpicture}[scale=.25]
    \path[use as bounding box] (0,0) rectangle (22,7);
    \fill[fill=black!15] (5,0) rectangle ++(1,7) +(-.5,-1) node {$e$};
    \fill[fill=black!15] (16,0) rectangle ++(1,7) +(-.5,-1) node {$f$};
    \fill[fill=black!5] (6,0) rectangle (16,7);
    \node at (11,6) {(I)};
    
    \begin{scope}
        \path[clip] (0,0) rectangle (22,7);
        \draw[thin, draw=black!40] (0,0.2) \u \u \u \u \u \u \u \u \u \u \u \d \d \d \d \d \d \d \d \d \d \d;
        \draw[thin, draw=black!40] (0,0)   \u \d \u \d \u \d \u \d \u \d \u \d \u \d \u \d \u \d \u \d \u \d;
    \end{scope}

    \draw[blue, thick] (0,0.15)  
    		\u \u \u \u \u \d \u \d \d coordinate (x) \u \d \d \u \u \u \d \d \u \d \d \d \d;
    \draw[red, thick] (0,0)  
    		\u \u \d \u \d \u \u \d \u \d \u \u \d \d \d \d \u \u \d \u \d \d;
    \node[red] at (1,2) {$P$};
    \node[blue] at (3,4) {$Q$};
    \path (x) +(0,-0.075) node [label=above:{$x$}, fill, circle, inner sep=1pt] {};
  \end{tikzpicture}\qquad
  \begin{tikzpicture}[scale=.25]
    \path[use as bounding box] (0,0) rectangle (22,7);
    \fill[fill=black!15] (5,0) rectangle ++(1,7) +(-.5,-1) node {$e$};
    \fill[fill=black!15] (16,0) rectangle ++(1,7) +(-.5,-1) node {$f$};
    \fill[fill=black!5] (6,0) rectangle (16,7);
    \node at (11,6) {(I)};
    
    \begin{scope}
        \path[clip] (0,0) rectangle (22,7);
        \draw[thin, draw=black!40] (0,0.2) \u \u \u \u \u \u \u \u \u \u \u \d \d \d \d \d \d \d \d \d \d \d;
        \draw[thin, draw=black!40] (0,0)   \u \d \u \d \u \d \u \d \u \d \u \d \u \d \u \d \u \d \u \d \u \d;
    \end{scope}

    \draw[blue, thick] (0,0.15)  
    		\u \u \u \u \u \d \u \d \d coordinate (x) \d \u \u \d \d \d \d \u \u \d \u \d \d;
    \draw[red, thick] (0,0)  
    		\u \u \d \u \d \u \u \d \u \u \d \d \u \u \u \d \d \u \d \d \d \d;
    \node[red] at (1,2) {$P'$};
    \node[blue] at (3,4) {$Q'$};
    \path (x) +(0.075,-0.075) node [label=above:{$x$}, fill, circle, inner sep=1pt] {};
  \end{tikzpicture}
  \caption{The injection for Case 1.}
\label{fig:case1}
\end{figure}

\item[Case 2.] Suppose the paths do not meet in region (I) and consider the paths $P, Q$ in the region (II) after position $f$. Imagine translating the fragment of path $Q$ after $f$ to $P$ so that their initial points in $f$ coincide. If the imaginary fragment intersects $P$, let $x$ be the first such point of intersection. Construct new paths $P',Q'$ by swapping the segments of $P$ and $Q$ between $f$ and $x$ (including $f$ itself).  See Figure \ref{fig:case2}.  If the new paths $P', Q'$ are both between $A$ and $B$ (so that $P'\in \B_e^f$ and $Q'\in B_f^e$), then we set $\varphi_{ef}(P, Q) = (P', Q')$.


\begin{figure}[ht]
  \centering 
  \begin{tikzpicture}[scale=.25]
    \path[use as bounding box] (0,0) rectangle (22,9);
    \fill[fill=black!15] (7,0) rectangle ++(1,9) +(-.5,-1) node {$e$};
    \fill[fill=black!15] (12,0) rectangle ++(1,9) +(-.5,-1) node {$f$};
    \fill[fill=black!5] (13,0) rectangle (22,9);
    \node at (18,8) {(II)};
    
    \begin{scope}
        \path[clip] (0,0) rectangle (22,9);
        \draw[thin, draw=black!40] (0,0.2) \u \u \u \u \u \u \u \u \u \u \u \d \d \d \d \d \d \d \d \d \d \d;
        \draw[thin, draw=black!40] (0,0)   \u \d \u \d \u \d \u \d \u \d \u \d \u \d \u \d \u \d \u \d \u \d;
    \end{scope}

    \draw[blue, thick] (0,0.15)  
    		\u \u \u \u \d \u \u \d \u \d \u \u coordinate (fQ) \d \d \u \d \u \d \d \d \d \d;
    \draw[red, thick] (0,0)  
    		\u \u \d \u \d \u \d \u \u \d \u \d coordinate (fP) \u \d \u \d \d \u \d \u \d \d;
    		
    \draw[red, thick, dotted] (fQ)+(0,.15)   \u \d \u \d \d coordinate (xQ) \u \d \u \d \d;
    \draw[blue, thick, dotted] (fP)          \d \d \u \d \u coordinate (xP) \d \d;
    \node[red] at (1,2) {$P$};
    \node[blue] at (3,4) {$Q$};
    \node[label=above:{$x$}, fill, circle, inner sep=1pt] at (xQ) {};
    \node[fill, circle, inner sep=1pt] at (xP) {};
  \end{tikzpicture}\qquad
  \begin{tikzpicture}[scale=.25]
    \path[use as bounding box] (0,0) rectangle (22,9);
    \fill[fill=black!15] (7,0) rectangle ++(1,9) +(-.5,-1) node {$e$};
    \fill[fill=black!15] (12,0) rectangle ++(1,9) +(-.5,-1) node {$f$};
    \fill[fill=black!5] (13,0) rectangle (22,9);
    \node at (18,8) {(II)};
    
    \begin{scope}
        \path[clip] (0,0) rectangle (22,9);
        \draw[thin, draw=black!40] (0,0.2) \u \u \u \u \u \u \u \u \u \u \u \d \d \d \d \d \d \d \d \d \d \d;
        \draw[thin, draw=black!40] (0,0)   \u \d \u \d \u \d \u \d \u \d \u \d \u \d \u \d \u \d \u \d \u \d;
    \end{scope}

    \draw[blue, thick] (0,0.15)  
    		\u \u \u \u \d \u \u \d \u \d \u \u coordinate (fQ) \u \d \u \d \d \d \d \d \d \d;
    \draw[red, thick] (0,0)  
    		\u \u \d \u \d \u \d \u \u \d \u \d coordinate (fP) \d \d \u \d \u \u \d \u \d \d;
    		
    \draw[blue, thick, dotted] (fQ)             \d \d \u \d \u coordinate (xQ);
    \draw[red, thick, dotted] (fP)              \u \d \u \d \d coordinate (xP);
    \node[red] at (1,2) {$P'$};
    \node[blue] at (3,4) {$Q'$};
    \node[label=above:{$x$}, fill, circle, inner sep=1pt] at (xQ) {};
    \node[fill, circle, inner sep=1pt] at (xP) {};
  \end{tikzpicture}
\caption{The injection for Case 2.}
\label{fig:case2}
\end{figure}
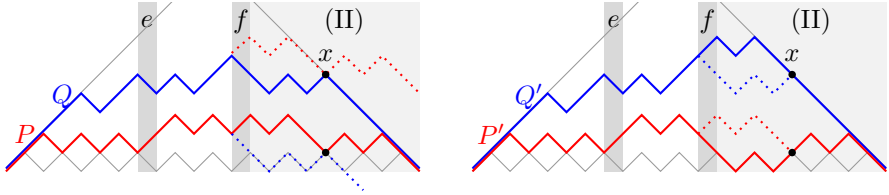

\item[Case 3.] Otherwise, perform the mirror image of the operation from Case 2 in the part (III) up to position $e$, as shown in Figure \ref{fig:case3} and again set $\varphi_{ef}(P, Q) = (Q', P')$. 

\begin{figure}[ht]
  \centering 
  \begin{tikzpicture}[scale=.25]
    \path[use as bounding box] (0,0) rectangle (22,9);
    \fill[fill=black!15] (7,0) rectangle ++(1,9) +(-.5,-1) node {$e$};
    \fill[fill=black!15] (12,0) rectangle ++(1,9) + (-.5,-1) node {$f$};
    \fill[fill=black!5] (0,0) rectangle ++(7,9);
    \node at (3.5,8) {(III)};
    
    \begin{scope}
        \path[clip] (0,0) rectangle (22,9);
        \draw[thin, draw=black!40] (0,0.2) \u \u \u \u \u \u \u \u \u \u \u \d \d \d \d \d \d \d \d \d \d \d;
        \draw[thin, draw=black!40] (0,0)   \u \d \u \d \u \d \u \d \u \d \u \d \u \d \u \d \u \d \u \d \u \d;
    \end{scope}

    \draw[blue, thick] (0,0.15)  
    		\u \u \u \u \d \u \u \d coordinate (eQ) \u \d \u \u coordinate (fQ) \d \d \d \u \u \d \d \d \d \d;
    \draw[red, thick] (0,0)  
    		\u \u \d \u \d \d \u \u coordinate (eP) \u \d \u \d coordinate (fP) \u \d \u \d \d \u \d \u \d \d;
    		
    \draw[red, thick, dotted] (fQ)           \u \d \u \d \d \u;
    \draw[blue, thick, dotted] (fP)          \d \d \d;
    
    \draw[red, thick, dotted] (eQ) \dl\dl\ul coordinate (xQ);
    \draw[blue, thick, dotted] (eP)\ul\dl\dl coordinate (xP);
    \node[red] at (1,2) {$P$};
    \node[blue] at (3,4) {$Q$};
    \node[label=above:{$x$}, fill, circle, inner sep=1pt] at (xQ) {};
    \node[fill, circle, inner sep=1pt] at (xP) {};
  \end{tikzpicture}\qquad
  \begin{tikzpicture}[scale=.25]
    \path[use as bounding box] (0,0) rectangle (22,9);
    \fill[fill=black!15] (7,0) rectangle ++(1,9) +(-.5,-1) node {$e$};
    \fill[fill=black!15] (12,0) rectangle ++(1,9) + (-.5,-1) node {$f$};
    \fill[fill=black!5] (0,0) rectangle ++(7,9);
    \node at (3.5,8) {(III)};
    
    \begin{scope}
        \path[clip] (0,0) rectangle (22,9);
        \draw[thin, draw=black!40] (0,0.2) \u \u \u \u \u \u \u \u \u \u \u \d \d \d \d \d \d \d \d \d \d \d;
        \draw[thin, draw=black!40] (0,0)   \u \d \u \d \u \d \u \d \u \d \u \d \u \d \u \d \u \d \u \d \u \d;
    \end{scope}
    
    \draw[blue, thick] (0,0.15)  
    		\u \u \u \u \d \d \u \u coordinate (eQ) \u \d \u \u coordinate (fQ) \d \d \d \u \u \d \d \d \d \d;
    \draw[red, thick] (0,0)  
    		\u \u \d \u \d \u \u \d coordinate (eP) \u \d \u \d coordinate (fP) \u \d \u \d \d \u \d \u \d \d;
    		
    
    \draw[blue, thick, dotted] (eQ) \ul\dl\dl coordinate (xQ);
    \draw[red, thick, dotted] (eP)  \dl\dl\ul coordinate (xP);
    \node[red] at (1,2) {$Q'$};
    \node[blue] at (3,4) {$P'$};
    \node[label=above:{$x$}, fill, circle, inner sep=1pt] at (xQ) {};
    \node[fill, circle, inner sep=1pt] at (xP) {};
  \end{tikzpicture}
\caption{The injection for Case 3.  Case 2 fails because the translated segment of $Q$ leaves the valid region before intersecting $Q$.}
\label{fig:case3}
\end{figure}
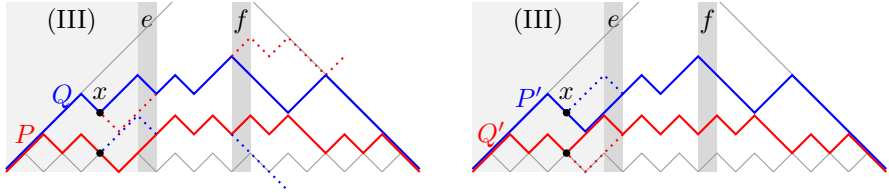
\end{description}

%
Case 1 covers all pairs of paths $P,Q$ such that $P$ is above $Q$ at $e$ and below it at $f$ or vice versa.  Case 2 covers all remaining pairs of paths with $P$ above $Q$ at $f$, since in this case the fragment of $Q$ must intersect $P$ and both resulting paths $(P',Q')$ are between $P$ and $Q$.  Finally, Case 3 covers all still remaining pairs of paths with $P$ below $Q$ at $e$.  Hence every pair of paths $P,Q$ is covered by one of these cases.


Finally we must argue that $\varphi_{ef}$ is injective.  Given $P',Q'$ we must be able to determine which case above was applied to produce them, and from this it is simple to recover $P$ and $Q$.  Since we know $e,f$ it is easy to identify the regions (I), (II), (III) in the three cases.  If $P',Q'$ intersect in region (I) then they must have come from Case 1, as neither of the later cases can produce such an intersection.  If they do not, first try to apply the inverse operation for Case 2, which is the same as the forwards operation; if this was not possible for the starting paths $P,Q$ then it is not possible for $P',Q'$ either (as performing the transformation of Case 3 cannot cause the transformation of Case 2 to become valid if it was not already), so if we have a pair of paths $P',Q'$ for which the second move is possible and does result in lattice paths between $A$ and $B$ then we know we must have arrived at it through Case 2.  Otherwise, we were in Case 3.  See Figure \ref{fig:recovery}.

\begin{figure}[ht]
  \centering
  \begin{tikzpicture}[scale=.25]
    \path[use as bounding box] (0,0) rectangle (22,8);
    \fill[fill=black!15] (7,0) rectangle ++(1,8);
    \fill[fill=black!15] (12,0) rectangle ++(1,8);
    
    \begin{scope}
        \path[clip] (0,0) rectangle (22,8);
        \draw[thin, draw=black!40] (0,0.2) \u \u \u \u \u \u \u \u \u \u \u \d \d \d \d \d \d \d \d \d \d \d;
        \draw[thin, draw=black!40] (0,0)   \u \d \u \d \u \d \u \d \u \d \u \d \u \d \u \d \u \d \u \d \u \d;
    \end{scope}

    \draw[blue, thick] (0,0.15)  
    		\u \u \u \u \d \d \u \u coordinate (eQ) \u \d \u \u coordinate (fQ) \d \d \d \u \u \d \d \d \d \d;
    \draw[red, thick] (0,0)  
    		\u \u \d \u \d \u \u \d coordinate (eP) \u \d \u \d coordinate (fP) \u \d \u \d \d \u \d \u \d \d;
    		
    \draw[red, thick, dotted] (fQ)           \u \d \u \d \d \u;
    \draw[blue, thick, dotted] (fP)          \d \d \d;
    
    \draw[blue, thick, dotted] (eQ) \ul\dl\dl coordinate (xQ);
    \draw[red, thick, dotted] (eP)  \dl\dl\ul coordinate (xP);
    \node[red] at (1,2) {$Q'$};
    \node[blue] at (3,4) {$P'$};
  \end{tikzpicture}
\caption{An example of recovering the case from the result.  Since the paths do not intersect in region (I) we must have come from Case 2 or 3.  Case 2 fails because the translated segment of $P'$ leaves the valid region before rejoining $Q'$, so the paths must have come from Case 3.  Note that the transformation $\phi_{ef}$ is usually not onto.}
\label{fig:recovery}
\end{figure}
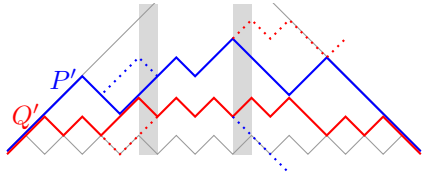
\end{proof}

\begin{remark}
    Since Case 1 can be applied whenever $P$ is above $Q$ at $e$ and below at $f$ or vice versa, Case 2 can be applied whenever $P$ is above $Q$ at both $e$ and $f$, and Case 3 can be applied whenever $P$ is below $Q$ at both $e$ and $f$, it is tempting to try to divide the cases more naturally according to the relative heights of $P$ and $Q$ at $e$ and $f$.  However, note that in Cases 2 and 3 (and sometimes Case 1) the resulting paths $P',Q'$ always have $P'$ above $Q'$ at both $e$ and $f$, and this more natural division of cases does not allow us to recover uniquely which transformation was applied. 
\end{remark}

\begin{remark}
    While the result of \cite{Bonin2006} that lattice path matroids are minor-closed shows that it suffices for us to prove negative correlation, it is worth noting that the same injection shows negative correlation directly for any minor of a lattice path matroid.  In particular, if we assume that $P$ and $Q$ above are both in the minor $\B_I^J$ (i.e., have up-steps at the indices of $I$ and down-steps at the indices in $J$) then the resulting paths $P'$ and $Q'$ again have this property. Moreover, the injection also implies that lattice path matroids are Rayleigh matroids (a class of matroids studied in \cite{Choe2006}).
\end{remark}

\section{Lower bound} \label{sec:lower-bounds}

To derive a lower bound on the mixing time of the Dyck random transposition chain we will need to refer to the definitions of the spectral gap and log-Sobolev constant.  Recall that for a Markov chain with state space $\Omega$, transition matrix $P$, and stationary distribution $\pi$
\begin{align*}
    \lambda \vcentcolon= \inf_f \frac{\mathcal{E}(f,f)}{\Var_\pi(f)}
    \qquad
    \text{and}
    \qquad
    \alpha \vcentcolon= \inf_f \frac{\mathcal{E}(f,f)}{\mathcal{L}_\pi(f)},
\end{align*}
where
\begin{align*}
    \mathcal{E}(f,f) &= \frac{1}{2} \sum_{x,y} (f(x) - f(y))^2 P(x,y) \pi(x)
    \qquad
    \text{and}
    \qquad\\
    \mathcal{L}_\pi(f) &= \sum_x f(x)^2(\log f(x)^2 - \log \E_\pi(f^2)) \pi(x)
\end{align*}
and the infima are taken over non-constant functions $f:\Omega\to \R$.

Using standard results we can (asymptotically) lower bound the mixing time by the relaxation time $\frac{1}{\lambda}$.
\marginnote{Can we give a better bound on $\alpha$ directly by using this test function?  It looks from Mathematica's plots as though $\mathcal{L}_\pi(f) \sim 4/n$, which probably isn't worth showing.}

\marginnote{Can we prove an $n^3\log n$ lower bound for Dyck adjacent transpositions, perhaps using a method similar to Wilson's lower bound for unconstrained adjacent transpositions?}

\begin{theorem}
    The spectral gap $\lambda$ and log-Sobolev constant $\alpha$ for the Dyck random transposition chain satisfy
    \[2\alpha \leq \lambda \leq \frac{4}{n}.\]
\end{theorem}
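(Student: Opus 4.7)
The plan is to handle the two inequalities separately. The left inequality $2\alpha \leq \lambda$ is the universal comparison between log-Sobolev constant and spectral gap that holds for every reversible Markov chain (see, e.g., Lemma~2.2.2 of~\cite{Diaconis1996}), so it requires no chain-specific work.

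For the right inequality $\lambda \leq 4/n$, the plan is to apply the variational inequality $\lambda \leq \mathcal{E}(f, f)/\Var_\pi(f)$ to the midpoint-height test function
\[
  f(P) = h_n(P) = \sum_{i=1}^n P_i,
\]
which should capture the slow mode of the chain because only a \emph{crossing swap}---one whose two endpoints lie on opposite sides of position $n$---can change $h_n$, and every such swap shifts $h_n$ by exactly $\pm 2$. For a path with $h_n(P) = h$, pairing up-steps in one half with down-steps in the other bounds the number of crossing swap attempts, valid or not, by $\bigl(\tfrac{n+h}{2}\bigr)^2 + \bigl(\tfrac{n-h}{2}\bigr)^2 = \tfrac{n^2 + h^2}{2}$, and each attempt has probability $\tfrac{1}{2n^2}$. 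Summing over paths gives
\[
  \mathcal{E}(f, f) \;\leq\; \frac{1}{n^2 C_n} \sum_P \frac{n^2 + h_n(P)^2}{2} \;=\; \frac{1}{2} + \frac{\E_\pi[h_n^2]}{2 n^2}.
\]

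For the denominator I would use the classical factorization $\Pr_\pi[h_n = h] = \mathrm{Bal}(n, h)^2/C_n$, where $\mathrm{Bal}(n, h)$ is the ballot number counting length-$n$ non-negative lattice paths ending at height $h$. Standard asymptotics of these ballot numbers (equivalently, the convergence of $h_n/\sqrt{2n}$ to the marginal of the Brownian excursion at time $\tfrac12$) yield $\E_\pi[h_n^2] = \Theta(n)$ and $\Var_\pi(h_n) = \Theta(n)$ with explicit constants, so the Dirichlet-to-variance ratio is $O(1/n)$. Tracking the constants shows this ratio is at most $4/n$ for all sufficiently large $n$; the remaining finitely many small cases are handled by direct enumeration of Dyck paths.

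The main obstacle is not the conceptual structure of the argument but the bookkeeping needed to nail down the specific constant $4$. The Dirichlet estimate is a clean combinatorial bound, whereas the variance step either needs careful asymptotics of $\mathrm{Bal}(n, h)$ or an appeal to the Brownian excursion limit, plus a finite check at small $n$ where the naive separated bounds $\mathcal{E}\le 1$ and $\Var_\pi(h_n)\ge n/4$ are just barely false. A slicker test function---say a threshold indicator $\mathbf{1}[h_n(P) \geq m]$ at the median $m$ of $h_n$---might produce the constant $4$ by a purely combinatorial argument and avoid the analytic detour.
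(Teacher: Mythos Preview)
Your framework is the same as the paper's: quote the universal inequality $2\alpha\le\lambda$, then bound $\lambda$ above via the variational principle with a well-chosen test function. The difference is entirely in the choice of $f$.

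The paper takes $f(x)$ to be the number of down-steps of $x$ occurring at \emph{even} indices. Any single transposition changes this by at most $1$, so $f$ is $1$-Lipschitz and $\mathcal{E}(f,f)\le \tfrac12$ immediately---no counting of crossing swaps, no $\E_\pi[h_n^2]$ term. Moreover the distribution of $f$ under $\pi$ is exactly the Narayana distribution $N(n,k)/C_n$, whose variance $\frac{(n+1)(n-1)}{4(2n-1)}\ge n/8$ is known in closed form. This yields $\lambda\le (1/2)/(n/8)=4/n$ for every $n$, with no asymptotics, no constant-tracking, and no finite check of small cases.

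Your midpoint-height function $h_n$ does capture the correct $\Theta(1/n)$ gap, and your Dirichlet estimate is fine. But you have correctly identified the weak point: the variance of $h_n$ under the Dyck measure is only available through ballot-number asymptotics or the Brownian-excursion limit, and the extra $\E_\pi[h_n^2]/(2n^2)$ term in your Dirichlet bound forces you to control second moments on both sides. Getting the exact constant $4$ out of that for all $n$ would be genuinely tedious. The paper's test function is precisely the ``slicker'' choice you were reaching for in your final paragraph.
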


\begin{proof}
It is always the case that $\alpha \leq \lambda/2$ (see, e.g., \cite{Jerrum-Son}), so it suffices to show that $1/\lambda\geq n/4$.  

For $x\in \Omega$ a Dyck path of length $2n$, consider the function $f(x)$ giving the number of down-steps of $x$ at even indices.  The number of Dyck paths $x$ of length $2n$ with $f(x) = k$ is precisely the Narayana number $N(n,k) = \frac{1}{n}\binom{n}{k}\binom{n}{k-1}$ (see \cite{Sulanke2002}).   
Note that $f$ is $1$-Lipschitz, in that if $x,y$ differ by a single transposition (i.e.\ $P(x,y) > 0$) then $\abs{f(x) - f(y)} \leq 1$.  We can therefore bound $\lambda$ by noting that for a $1$-Lipschitz function we have $\mathcal{E}(f,f) \leq 1/2$, and so it suffices to give a lower bound on $\Var_\pi(f)$, 
which is precisely the variance of the Narayana distribution with p.m.f.\ $p(k) = N(n,k)/C_n$.  This distribution is hypergeometric, and its variance is $\frac{(n+1)(n-1)}{4(2n-1)} \geq n/8$ (see, e.g., \cite{Johnson2005}).
\end{proof}


\begin{corollary}
    The (total variation) mixing time of the Dyck random transposition walk is at least $\Omega(n)$.
\end{corollary}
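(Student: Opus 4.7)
The plan is to derive the corollary as an essentially immediate consequence of the spectral gap upper bound $\lambda \le 4/n$ just proved in the preceding theorem, by appealing to the standard relation between the relaxation time and the total variation mixing time.

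First, I would recall the standard inequality (see, e.g., Levin--Peres--Wilmer or \cite{Diaconis1996}) that for any reversible, irreducible Markov chain on a finite state space,
\begin{equation}
t_{\mathrm{mix}}(\varepsilon) \;\ge\; \left(\frac{1}{\lambda} - 1\right)\log\!\left(\frac{1}{2\varepsilon}\right),
\end{equation}
where $\lambda$ is the spectral gap and $t_{\mathrm{mix}}(\varepsilon)$ denotes the $\varepsilon$-total variation mixing time. The Dyck random transposition chain is reversible with respect to the uniform distribution on Dyck paths and is irreducible (this is just the basis exchange walk on the Catalan matroid), so the inequality applies.

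Plugging in the bound $\lambda \le 4/n$ from the preceding theorem gives
\begin{equation}
t_{\mathrm{mix}}(\varepsilon) \;\ge\; \left(\frac{n}{4} - 1\right)\log\!\left(\frac{1}{2\varepsilon}\right) \;=\; \Omega(n),
\end{equation}
for any fixed $\varepsilon \in (0, 1/2)$ (say, the conventional choice $\varepsilon = 1/4$). This is the claimed lower bound.

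The only real step worth being careful about is verifying that we may invoke the spectral-gap-to-mixing-time comparison in the form stated; since the chain is reversible and finite, no complication arises, and there is no significant obstacle. In fact the proof is so short that I would write the corollary as a one-line deduction from the previous theorem, with a brief parenthetical pointer to the standard reference for the inequality $t_{\mathrm{mix}} \ge (1/\lambda - 1)\log(1/(2\varepsilon))$.
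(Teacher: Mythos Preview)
Your proposal is correct and matches the paper's approach: the paper explicitly notes, just before the theorem, that ``using standard results we can (asymptotically) lower bound the mixing time by the relaxation time $\frac{1}{\lambda}$,'' and then states the corollary without further proof. Your write-up simply spells out the specific standard inequality being invoked, which is exactly what is intended.
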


\section{Open questions} \label{sec:open-questions}

Of course, the most immediate open question is whether the $O(n^2\log n)$ mixing time bound for Dyck transpositions is tight. 
\marginnote{Is there an easy $\Omega(n\log n)$ lower bound, or one we can cite?  We have an $O(1/n)$ bound on spectral gap.}
The general result for lattice path matroids is clearly not tight in every case (as it gives only order $n^2\log n$ mixing time for Bernoulli-Laplace, where we know that the true bound is order $n\log n$), but there are lattice path matroids for which the $O(n^2\log n)$ bound is tight\footnote{One example is the lattice path matroid $\L[A,B]$ where $A = (-+)^n$ and $B=(+-)^n$, which is equivalent to the random walk on the $n$-cube slowed down by a factor of $n$.}.  We believe that the Catalan matroid falls closer to the Bernoulli-Laplace side of the spectrum. 
Either way, we hope in the future to work towards a more complete characterization of the mixing rates for various lattice path transposition chains.
\marginnote{dy: Can we prove a kind of monotonic result: say, if the lattice region A is contained in the region B, then mixing time for A is at least as for B?}
\marginnote{EC: The chain on the region between $+-(+-)^{n-1}$ and $-+(+-)^{n-1}$ mixes in order $n^2$ time, which is faster than the $n^2 \log n$ mixing time for the region between $(+-)^n$ and $(-+)^n$. Might still work for spectral gap?}

It is also not clear what consequences the new bound for Dyck paths might have for mixing on other Catalan structures.  Many well-known Catalan structures suggest natural Markov chains, but it is striking how different they can be---even though there are natural bijections between classes of Catalan structures, they frequently do not preserve natural notions of distance, making standard comparison arguments tricky.  There seems to be no obvious way to leverage even a hypothetical $O(n\log n)$ mixing time result for Dyck transpositions to yield improved bounds for any other Catalan chain. It is worth noting that the classical ($2n+1$-to-one many) Chung-Feller cyclic lemma suggests projecting the chain consisting of all transpositions on the (Bernoulli-Laplace) space of $\{\pm 1\}^{2n+1}$, with $(n+1)$ 1's and $n$ -1's, onto the set of Dyck paths. This projection chain is in fact a Markov chain, as was observed by \cite{CRT10}, and as such inherits a lower bound on the spectral gap of order $1/n$, from that of the Bernoulli-Laplace chain; however, many of the moves in the resulting projection chain are not particularly natural when viewed from the point of Dyck paths, and once again comparisons do not seem to help very much.


An alternate approach would be to come up with a general scheme to bound mixing times which could be modified for any Catalan structure.  For example, it is well-known that the Catalan numbers satisfy the recurrence
\[C_n = \sum_{i=1}^{n} C_{i-1} C_{n-i},\]
and this is evidenced by a recursive structure in almost every combinatorial realization of the Catalan sequence.  It is common to exploit recursive structure in bounding mixing times, and indeed both the triangulation bound of \cite{MRS98} and the balanced matroid bounds of \cite{Feder1992, Jerrum-Son, Jerrum2004} exploit different recursive structures of those objects.  Unlike the recurrence above, the recursive structures used in those papers seem very particular to the Catalan realizations at hand, and do not generalize well to other Catalan chains.

Finally, we have mentioned above the conjecture that the basis exchange walk is rapidly mixing for \emph{all} matroids.  As evidence, it would be interesting to show rapid mixing for some class of matroids which is not a subclass of balanced matroids.  One promising candidate might be the class of transversal matroids, for which the basis exchange walk seems closely related to the well-studied problem of walks on bipartite graph matchings.


\section*{Acknowledgements}
The authors thank the Institute for Mathematics and its Applications (IMA) for its excellent research atmosphere, generous support, and hospitality, which were all crucial to the collaboration resulting in the present work. The first two authors also acknowledge additional NSF support by way of the grant DMS-1407657.

\bibliographystyle{abbrv}
\bibliography{catalan}

\end{document}